\documentclass[12pt]{article}
\usepackage{e-jc_noname}  
\usepackage{amsthm,amsmath,amssymb}
\usepackage{graphicx}
\usepackage[colorlinks=true,citecolor=black,linkcolor=black,urlcolor=blue]{hyperref}

\theoremstyle{plain}
\newtheorem{theorem}{Theorem}

\theoremstyle{definition}

\newtheorem{conjecture}[theorem]{Conjecture}

\theoremstyle{remark}

\title{Burling graphs, chromatic number, and orthogonal tree-decompositions} 

\author{Stefan Felsner \\
\small Institut f\"ur Mathematik\\[-0.8ex]
\small Technische Universit\"at Berlin\\[-0.8ex] 
\small Berlin, Germany\\
\small\tt felsner@math.tu-berlin.de\\
\and
Gwena\"{e}l Joret\thanks{Supported by an ARC grant from the Wallonia-Brussels Federation of Belgium.} \\
\small Computer Science Department\\[-0.8ex]
\small Universit\'e Libre de Bruxelles\\[-0.8ex]
\small Brussels, Belgium\\
\small\tt gjoret@ulb.ac.be \\
\and 
Piotr Micek\thanks{Supported by a Polish National Science Center grant (SONATA BIS 5; UMO-2015/18/E/ST6/00299). } \\
\small Theoretical Computer Science Department\\[-0.8ex]
\small Jagiellonian University\\[-0.8ex]
\small Krak\'ow, Poland\\[-0.8ex]
\small Institute of Mathematics\\[-0.8ex]
\small Freie Universit\"at Berlin\\[-0.8ex]
\small Berlin, Germany \\
\small\tt piotr.micek@tcs.uj.edu.pl \\
\and 
William T. Trotter \\
\small School of Mathematics\\[-0.8ex]
\small Georgia Institute of Technology\\[-0.8ex]
\small Atlanta, Georgia, U.S.A.\\
\small\tt trotter@math.gatech.edu \\
\and 
Veit Wiechert\thanks{Supported by the Deutsche Forschungsgemeinschaft within the research training group `Methods for Discrete Structures' (GRK 1408).} \\
\small Institut f\"ur Mathematik\\[-0.8ex]
\small Technische Universit\"at Berlin\\[-0.8ex] 
\small Berlin, Germany\\
\small\tt wiechert@math.tu-berlin.de
}


\newcommand{\calC}{\mathcal{C}}

\newcommand{\calS}{\mathcal{S}}

\let\leq\leqslant
\let\geq\geqslant

\let\epsilon\varepsilon

\sloppy

\begin{document}

\maketitle

\begin{abstract}
	A classic result of Asplund and Gr\"unbaum states that intersection graphs of axis-aligned rectangles in the plane are $\chi$-bounded. 	
	This theorem can be equivalently stated in terms of path-decompositions as follows:  There exists a function $f:\mathbb{N}\to\mathbb{N}$ such that every graph that has two path-decompositions such that each bag of the first decomposition intersects each bag of the second in at most $k$ vertices has chromatic number at most $f(k)$.  
	Recently, Dujmovi\'c, Joret, Morin,  Norin, and Wood asked whether this remains true more generally for two tree-decompositions. 
	In this note we provide a negative answer: There are graphs with arbitrarily large chromatic number for which one can find two tree-decompositions such that each bag of the first decomposition intersects each bag of the second in at most two vertices. 
	Furthermore, this remains true even if one of the two decompositions is restricted to be a path-decomposition. 
This is shown using a construction of triangle-free graphs with unbounded chromatic number due to Burling, which we believe should be more widely known. 
\end{abstract}

\section{Burling graphs}

For each $k \geq 1$, we define the {\em Burling graph} $G_k$ and a collection $\mathcal{S}(G_k)$ of stable sets of $G_k$ by induction on $k$ as follows. 
First, let $G_1$ be the graph consisting of a single vertex and let $\mathcal{S}(G_1)$ contain just the single vertex stable set of $G_1$.  
Next, suppose $k \geq 2$ for the inductive case.  
First, take a copy $H$ of $G_{k-1}$, which we think of as the `master' copy. 
For each stable set $S\in \calS(H)$, let $H_S$ denote a new copy of $G_{k-1}$. 
Furthermore, for each stable set $X\in \calS(H_S)$, introduce a new vertex $v_{S, X}$ adjacent to all vertices in $X$ but no others.  
Let us denote by $H'_S$ the graph obtained from $H_S$ resulting from these vertex additions.  
The graph $G_k$ is then defined as the union of $H$ and $H'_S$ over all $S\in \calS(H)$. 
Its collection  $\mathcal{S}(G_k)$ consists of two sets for each $S\in\calS(H)$ and $X\in\calS(H_S)$, namely: $S\cup X$ and $S\cup \{v_{S,X}\}$. 
Observe that $S\cup X$ and $S\cup \{v_{S,X}\}$ are both stable sets of $G_k$. 

Burling defined the family $\{G_k\}$ in his PhD Thesis~\cite{Bur65} in 1965 and proved that these graphs have unbounded chromatic number. 
However, this construction went mostly unnoticed until it was rediscovered in~\cite{segment}.  
(One exception is a set of unpublished lecture notes of Gy\'arf\'as~\cite{G03} from 2003, which has a section devoted to Burling graphs.) 

\begin{theorem}[\cite{Bur65}]
For every $k \geq 1$, the Burling graph $G_k$ is triangle free and has chromatic number at least $k$.
\end{theorem}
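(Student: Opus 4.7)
The plan is to prove both properties simultaneously by induction on $k$, with the chromatic number part requiring a strengthened inductive hypothesis phrased in terms of the collection $\calS(G_k)$ itself.

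For triangle-freeness, I would induct on $k$. By induction, the master copy $H \cong G_{k-1}$ and each of the copies $H_S$ are triangle-free. The only new vertices in $G_k$ are the vertices $v_{S,X}$, and each such vertex has neighborhood equal to the stable set $X \subseteq V(H_S)$, which is independent. Hence no $v_{S,X}$ can lie in a triangle, and since the construction adds no edges between distinct copies, $G_k$ is triangle-free.

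For the chromatic number, the bare statement $\chi(G_k) \geq k$ seems too weak to push through induction directly. I would instead strengthen it to the following: for every proper coloring $c$ of $G_k$, there exists some $T \in \calS(G_k)$ with $|c(T)| \geq k$. This immediately implies $\chi(G_k) \geq k$. The base case $k=1$ is trivial. For the inductive step, apply the hypothesis to the restriction of $c$ to the master copy $H \cong G_{k-1}$ to obtain $S \in \calS(H)$ with $|c(S)| \geq k-1$, and then apply it again to the restriction of $c$ to $H_S \cong G_{k-1}$ to obtain $X \in \calS(H_S)$ with $|c(X)| \geq k-1$. Since $v_{S,X}$ is adjacent to every vertex of $X$, the color $c(v_{S,X})$ is not in $c(X)$. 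If also $c(v_{S,X}) \notin c(S)$, then $S \cup \{v_{S,X}\} \in \calS(G_k)$ uses at least $k$ colors. Otherwise $c(v_{S,X}) \in c(S) \setminus c(X)$, and then $|c(S \cup X)| \geq |c(X)| + 1 \geq k$, so $S \cup X \in \calS(G_k)$ is the desired stable set.

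The main conceptual step is identifying the correct strengthened inductive claim; once it is formulated in terms of $\calS(G_k)$, the argument is essentially forced by the recursive definition, and amounts to the short case analysis above using the two kinds of new stable sets ($S\cup X$ and $S\cup\{v_{S,X}\}$) produced by the construction.
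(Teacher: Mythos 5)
Your proof is correct and follows essentially the same route as the paper: the same strengthened inductive claim (every proper coloring uses at least $k$ colors on some member of $\calS(G_k)$), applied to the master copy and then to $H_S$, with a short case analysis over the two new kinds of stable sets. The only cosmetic difference is that you split on whether $c(v_{S,X})\in c(S)$ while the paper splits on whether $S\cup X$ already receives $k$ colors; the two case analyses are equivalent.
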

\begin{proof}
The fact that $G_k$ is triangle free follows directly by observing that, when creating a vertex $v_{S,X}$ in the definition of $G_k$, its neighborhood is a stable set.  
To show that $\chi(G_k) \geq k$, we prove the following stronger statement by induction on $k$: 
For every proper coloring $\phi$ of $G_k$, there exists a stable set $S \in \calS(G_k)$ such that $\phi$ uses at least $k$ colors for vertices in $S$. 
This is obviously true for $k=1$, so let us assume $k \geq 2$ and consider the inductive case. 
Let $\phi$ be a proper coloring of $G_k$. 
In what follows, the notations $H$, $H_S$, and $H'_S$ refer to the graphs used in the definition of $G_k$. 
By induction, there is a stable set $S\in \calS(H)$ such that $\phi$ uses at least $k-1$ colors on $S$. 
Similarly, there is a stable set $X\in \calS(H_S)$ such that $\phi$ uses at least $k-1$ colors on $X$.  
If $\phi$ uses at least $k$ colors on $S\cup X$, we are done since $S\cup X \in \calS(G_k)$. 
If not, then $\phi$ uses exactly the same set $C$ of $k-1$ colors on $S$ and on $X$. 
This implies that the vertex $v_{S,X}$ is colored with a color not in $C$, and hence $\phi$ uses $k$ colors on the stable set $S\cup \{v_{S,X}\} \in \calS(G_k)$. 
\end{proof}

Mycielski~\cite{M55}, and Erd\H{o}s and Hajnal~\cite{EH68} each described easy constructions of triangle-free graphs with unbounded chromatic number that are classics nowadays.  
We believe that Burling graphs should be more widely known, for their definition is simple and yet they exhibit some unique properties. 
In particular, Burling graphs admit various geometric representations that are not known to exist for any other family of triangle-free graphs with unbounded chromatic number, which we briefly survey now. 
 
First, recall that a class of graphs $\calC$ is \emph{$\chi$-bounded} if there is a function $f$ such that $\chi(G)\leq f(\omega(G))$ for all $G\in \calC$, where $\omega(G)$ denotes the maximum size of a clique in $G$. 

Burling~\cite{Bur65} showed that each $G_k$ can be obtained as the intersection graph of axis-aligned boxes in $\mathbb{R}^3$. 
Hence, this implies that intersection graphs of axis-aligned boxes in $\mathbb{R}^3$ are not $\chi$-bounded. 
This is in contrast with the result of Asplund and Gr\"unbaum~\cite{AG60} that $\chi(G) \in O(\omega^2(G))$ for intersection graphs $G$ of axis-aligned rectangles. 
(We remark that Reed and Allwright~\cite{RA08} (see also~\cite{MM11}) described another interesting construction of axis-aligned boxes in $\mathbb{R}^3$ whose intersection graph has high chromatic number, with the extra property that the interiors of the boxes are pairwise disjoint, implying that the clique number is at most $4$.)   

In the 1970s, Erd\H{o}s asked whether intersection graphs of line segments in the plane are $\chi$-bounded. 
A negative answer was provided by Pawlik, Kozik, Krawczyk, Laso\'n, Micek, Trotter, and Walczak~\cite{segment}: The authors represented the Burling graphs as intersection graphs of segments in the plane. 
This result also disproves the conjecture of Scott~\cite{Scott97} that, for every graph $H$, the class of graphs excluding every subdivision of $H$ as an induced subgraph is $\chi$-bounded.
Indeed, segment intersection graphs---and thus in particular Burling graphs---do not contain any subdivision of $H$ as an induced subgraph when $H$ is the $1$-subdivision of a non-planar graph. 
Later on, Chalopin, Esperet, Li, and Ossona de Mendez~\cite{Chalopin16} showed that Burling graphs in fact even exclude all subdivisions of $H$ as an induced subgraph when $H$ is the $1$-subdivision of $K_4$.

\section{Application to orthogonal tree-decompositions}

A {\em tree-decomposition} of a graph $G$ is a pair $(T,\{B_t\}_{t\in V(T)})$ where $T$ is a tree and the sets $B_t$ ($t \in V(T)$) are subsets of $V(G)$ called {\em bags} satisfying the following properties: 
\begin{enumerate}
\item for each edge $uv \in E(G)$ there is a bag containing both $u$ and $v$, and
\item for each vertex $v\in V(G)$, the set of vertices $t\in V(T)$  with $v \in B_t$ induces a non-empty subtree of $T$. 
\end{enumerate}
The {\em width} of the tree-decomposition is the maximum size of a bag minus $1$. 
The {\em tree-width} of $G$ is the minimum width of tree-decompositions of $G$. 
Path-decompositions and path-width are defined analogously, with the extra requirement that the tree $T$ be a path. 
We refer the reader to Diestel~\cite{D10} for background on tree-decompositions. 

The following generalization of tree-decompositions was recently introduced by Stavropoulos~\cite{K15, K16} and investigated by Dujmovi\'c, Joret, Morin,  Norin, and Wood~\cite{DJMNW17}.  
Suppose that $(T^1,\{B^1_t\}_{t\in V(T^1)}), \dots, (T^k,\{B^k_t\}_{t\in V(T^k)})$ are $k$ tree-decompositions of a graph $G$. 
Let then the {\em $k$-width} of these decompositions be the maximum of $|B^1_{t_1} \cap \cdots \cap B^k_{t_k}|$ over all $(t_1, \dots, t_k) \in V(T_1) \times \cdots \times V(T_k)$. 
The {\em $k$-tree-width} of $G$, also called {\em $k$-medianwidth} of $G$ in~\cite{K15, K16}, is the minimum $k$-width of all $k$-tuples of tree-decompositions of $G$.  
Replacing trees with paths, we obtain the corresponding notion of {\em $k$-path-width} of $G$, also known as {\em $k$-latticewidth}~\cite{K16}. 
Intuitively, to show that the $k$-tree-width or $k$-path-width of $G$ is small, we want to choose a $k$-tuple of tree/path-decompositions of $G$ that are as `orthogonal' as possible: 
For instance, to see that a grid has bounded $2$-path-width, one can take a `horizontal' path-decomposition where bags are unions of two consecutive columns, and a `vertical' one where bags are unions of two consecutive rows. 
 
The $k$-tree-width of $G$ for $k=1, 2, 3, \dots$ forms a non-increasing sequence of numbers that converges to the clique number $\omega(G)$ of $G$,  and the same is true for the $k$-path-width of $G$~\cite{K15, K16}. 
Thus these numbers can be seen as interpolating between the tree-width / path-width of $G$  (plus one) and its clique number. 

Some graph classes of interest already have bounded $2$-tree-width. 
For instance, planar graphs, and more generally graphs excluding a fixed graph $H$ as minor~\cite{DJMNW17}. 
In fact, for planar graphs and some of their generalizations, one can even require one of the two tree-decompositions to be a path-decomposition such that each vertex appears in at most two bags, see~\cite{DJMNW17} and the references therein. 
Note however that graphs with bounded $2$-tree-width are not necessarily sparse: All bipartite graphs have $2$-tree-width (and even $2$-path-width) at most $2$. 

The $k$-path-width of a graph $G$ can equivalently be defined as the minimum $q$ such that $G$ is a subgraph of an intersection graph $H$ of axis-aligned boxes in $\mathbb{R}^k$ with $\omega(H) \leq q$. 
(To see this, recall that axis-aligned boxes in $\mathbb{R}^k$ satisfy the Helly property.) 
In particular, $\chi(G)$ is bounded from above by a function of the $2$-path-width of $G$, since intersection graphs of axis-aligned rectangles in the plane are $\chi$-bounded~\cite{AG60}. 
This prompted the authors of~\cite{DJMNW17} to ask whether the same remains true for the $2$-tree-width of $G$. 
We show that this is not the case, even if one the two decompositions is restricted to be a path-decomposition. 

\begin{theorem}
For every $k\geq 1$, the Burling graph $G_k$ has a tree-decomposition $(T,\{B_t\}_{t\in V(T)})$ and a path-decomposition $(P,\{B_p\}_{p\in V(P)})$ such that $|B_t\cap B_p|\leq 2$ for every $t\in V(T)$ and every $p\in V(P)$.
\end{theorem}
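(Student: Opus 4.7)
The natural strategy is induction on $k$. The base case $k=1$ is the single-vertex graph with a single-bag decomposition. However the bare statement does not propagate cleanly: when building $G_k$ out of $G_{k-1}$-copies, we need controlled places in the inherited decompositions to attach the new vertices $v_{S,X}$. I would therefore strengthen the inductive hypothesis so that the decompositions $T_k, P_k$ additionally satisfy, for every $S \in \mathcal{S}(G_k)$: (b) the vertices of $S$ lie in pairwise disjoint subpaths of $P_k$, and (c) some bag of $T_k$ contains $S$.

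For the inductive step, let $H \cong G_{k-1}$ be the master copy with decompositions $(T^H,P^H)$, and let $(T^S,P^S)$ be analogous decompositions for each copy $H_S$. I would build $T_k$ by joining $T^H$ and all of the $T^S$ into one tree, and, for each new vertex $v_{S,X}$, attaching a new leaf bag $L_{S,X} = X \cup \{v_{S,X}\}$ onto the bag $B^T_X$ of $T^S$ containing $X$ (which exists by (c) applied to $H_S$). To guarantee (c) for $G_k$ itself, I would also introduce bags $N_{S,X} = S \cup X$ and $M_{S,X} = S \cup \{v_{S,X}\}$, realising the two new kinds of stable sets in $\mathcal{S}(G_k)$, and propagate each $s \in S$ along the unique tree-path from its original bag in $T^H$ to these added bags so that $s$'s bag-set remains a subtree. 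The path-decomposition $P_k$ would be the concatenation of $P^H$ with each $P^S$; for each $v_{S,X}$ I would then insert $v_{S,X}$ into the contiguous range of $P^S$-bags spanning the (pairwise disjoint, by (b)) subpaths of the vertices of $X$, so that every edge $v_{S,X}x$ is covered.

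Orthogonality would be checked case by case. Bags from different components of $G_k$ (namely $H$ and the various $H'_S$) have empty intersection. Within a single $H'_S$, the propagated $S$-vertices lie in $V(H)$, which is disjoint from the support $V(H_S) \cup \{v_{S,X'}\}$ of the $P^S$-bags, so the propagation does not raise the $T^S$-versus-$P^S$ intersections beyond the inductive bound of $2$. The leaf bag $L_{S,X}$ intersects a $P^S$-bag in at most one vertex of $X$ (by (b)) plus possibly $v_{S,X}$, giving at most $2$; similarly the bridging bags $N_{S,X}$ and $M_{S,X}$ intersect $P^H$-bags in at most one vertex of $S$ (by (b) applied to $H$). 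Properties (b) and (c) for $G_k$ then follow: the new stable sets split between the disjoint $P^H$- and $P^S$-segments, and the bags $N_{S,X}, M_{S,X}$ were added precisely to realise (c).

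The hard part is the bookkeeping in the inductive step: routing the propagation of the $S$-vertices through $T^S$ and the bridging bags without ever creating a triple-vertex intersection with any path-bag. The reason it works is that every vertex that gets propagated comes from $V(H)$, which is disjoint from the vertex set supporting the bags on the other side of any intersection we need to control; this cleanly separates the bookkeeping for $H$ from that for each $H_S$ and keeps the orthogonality bound at $2$ throughout the induction.
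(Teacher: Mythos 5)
Your strategy is essentially the paper's: the same induction with the same two strengthenings (your (b) is the paper's requirement that every $S\in\mathcal{S}(G_k)$ meet every path-bag in at most one vertex, and your (c) is its requirement that every such $S$ be realized by a tree-bag), the same tree built from $T^H$ and the $T^S$ with extra bags for $S\cup X$ and $S\cup\{v_{S,X}\}$, and the same path obtained by concatenating $P^H$ with the $P^S$ and placing the apex vertices $v_{S,X}$ into the $P^S$ segment. The minor differences (you insert each $v_{S,X}$ only into the minimal range of $P^S$ covering the subpaths of $X$, rather than into every bag of $P^S$; you hang $X\cup\{v_{S,X}\}$ off a bag containing $X$ rather than replacing that bag) are harmless.

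There is, however, one concrete gap in your verification. You never say \emph{where} $T^S$ is joined to $T^H$, and you then ``propagate each $s\in S$ along the unique tree-path from its original bag in $T^H$'' to the new bags $N_{S,X}$ and $M_{S,X}$. If the join point is not a bag of $T^H$ already containing all of $S$, this propagation adds vertices of $S$ to bags of $T^H$ that did not previously contain them, and a $T^H$-bag meeting a $P^H$-bag in two vertices can then meet it in three. Your stated reason why propagation is safe --- that the propagated vertices come from $V(H)$, which is disjoint from the support of the bags ``on the other side'' --- is exactly false in this case, since both the $T^H$-bags and the $P^H$-bags are supported on $V(H)$; your case analysis silently omits the (augmented $T^H$-bag) versus ($P^H$-bag) intersections. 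The fix is available from your own hypothesis: attach $T^S$ to $T^H$ at a bag containing $S$ (which (c) for $H$ provides), so that the propagation path for each $s\in S$ enters $T^S$ immediately and no bag of $T^H$ is ever modified. This is precisely what the paper does: it attaches $T^{H,S}$ at the bag of $T^H$ equal to $S$ and then adds $S$ to \emph{every} bag of $T^{H,S}$, which makes the subtree condition automatic and confines all modifications to bags whose path-side partners are disjoint from $V(H)$ or meet $S$ in at most one vertex by property (b).
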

\begin{proof}
The proof is by induction on $k$. 
To facilitate the induction, we will prove  that the tree-decomposition and the path-decomposition can be chosen such that 
\begin{enumerate}
    \item $|B_t\cap B_p|\leq 2$ for every $t\in V(T)$ and every $p\in V(P)$; \label{prop1}
    \item for every $S \in \calS(G_k)$, there exists $t\in V(T)$ such that $B_t = S$, and \label{prop2}
    \item $|S \cap B_p| \leq 1$ for every $S \in \calS(G_k)$ and every $p\in V(P)$. \label{prop3}
\end{enumerate}

The claim is trivially true for $k=1$, so let us consider the inductive case $k \geq 2$. 
As before, the notations $H$, $H_S$, and $H'_S$ refer to the graphs used in the definition of $G_k$. 
Let $(T^H,\{B^H_t\}_{t\in V(T^H)})$ and $(P^H,\{B^H_p\}_{p\in V(P^H)})$ denote the tree-decomposition and path-decomposition of $H$ given by the induction hypothesis. 
Similarly, for each stable set $S\in \calS(H)$, let $(T^{H,S},\{B^{H,S}_t\}_{t\in V(T^{H,S})})$ and $(P^{H,S},\{B^{H,S}_p\}_{p\in V(P^{H,S})})$ denote the tree-decomposition and path-decomposition of $H_S$ obtained from induction. 
(As expected, we assume that $T^H$, $P^H$, and all the $T^{H,S}$s and $P^{H, S}$s are pairwise vertex disjoint.)

Define the tree $T$ as follows. 
Start with the union of $T^H$ and $T^{H,S}$ for all $S\in \calS(H)$. 
Then, for each $S\in \calS(H)$, add an edge linking a vertex $t \in V(T^H)$ such that $B^H_t = S$ (which exists by induction) to an arbitrary vertex in $V(T^{H, S})$.  
Finally, for each $S\in \calS(H)$ and $X \in \calS(H_S)$, let $t_{S,X}$ denote a vertex in $V(T^{H,S})$ such that $B^{H, S}_{t_{S, X}} = X$. 
Add two leaves $t^1_{S,X}$, $t^2_{S,X}$ adjacent to $t_{S,X}$. 

\begin{figure}[t]
\centering
\includegraphics{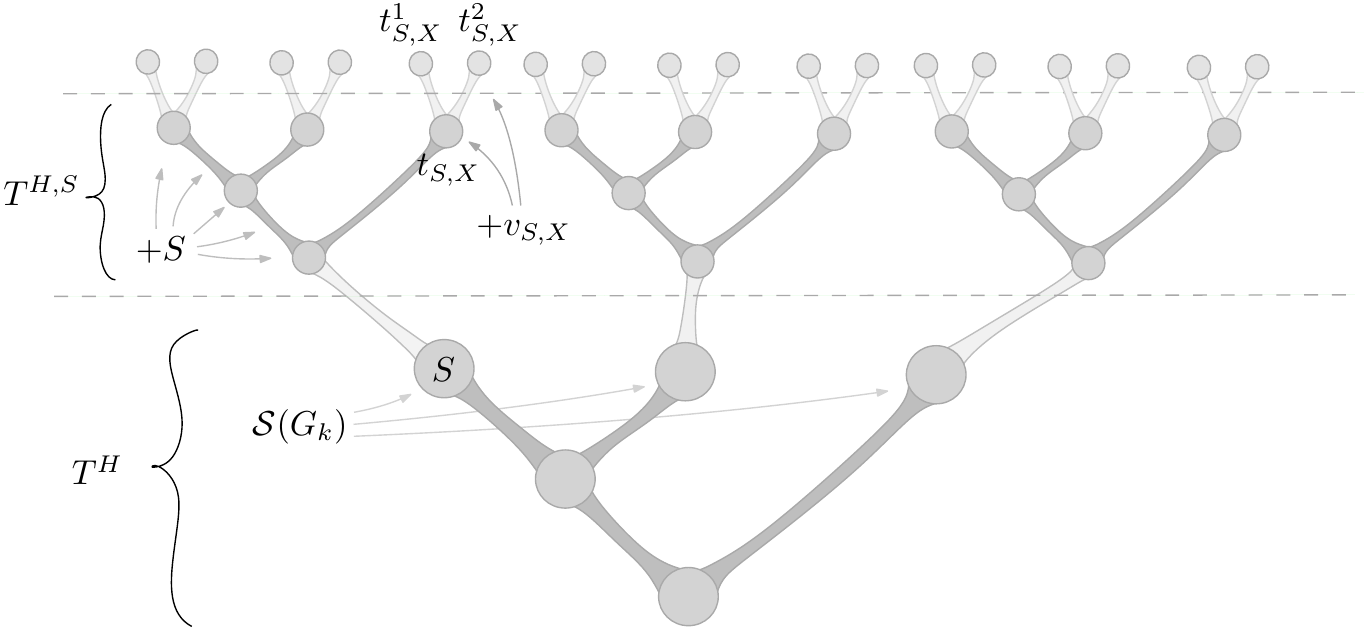}
\caption{Tree-decomposition of $G_k$.\label{fig:td}}
\end{figure}

The bags $B_t$ ($t\in V(T)$) of the tree-decomposition of $G_k$ are defined as follows (see Figure~\ref{fig:td} for an illustration) :  
$$
B_t := \left\{ 
\begin{array}{ll}
B^H_t & \textrm{ if } t\in V(T^H) \\[.5ex]
S \cup X \cup \{v_{S,X}\}   & \textrm{ if } t=t_{S,X} \textrm{ for some } S\in \calS(H) \textrm{ and } X \in \calS(H_S)\\[.5ex]
S \cup X   & \textrm{ if } t=t^1_{S,X} \textrm{ for some } S\in \calS(H) \textrm{ and } X \in \calS(H_S)\\[.5ex]
S \cup \{v_{S,X}\}   & \textrm{ if } t=t^2_{S,X} \textrm{ for some } S\in \calS(H) \textrm{ and } X \in \calS(H_S)\\[.5ex]
S \cup B^{H, S}_t  &\begin{array}{ll}\textrm{if }  t \in V(T^{H, S}) \textrm{ for some }  S\in \calS(H) \\ \quad \textrm{ and } t\neq t_{S, X} \textrm{ for all }X\in \calS(H_S) 
\end{array}
\end{array}
\right. 
$$

For each vertex $v\in V(G_k)$, the set of vertices $t\in V(T)$ such that $v\in B_t$ clearly induces a subtree of $T$. 
Moreover, the two endpoints of each new edge of the form $v_{S,X}x$ with $S\in \calS(H)$, $X\in \calS(H_S)$, and $x\in X$ lie in a common bag, namely $B_t$ with $t=t_{S,X}$.  
It follows that $(T,\{B_t\}_{t\in V(T)})$ is a tree-decomposition of $G_k$. 

We show that property~\ref{prop2} holds. 
Recall that  each set in $\calS(G_k)$ is either of the form $S \cup X$ or of the form $S\cup \{v_{S,X}\}$ for some  $S\in\calS(H)$ and $X\in\calS(H_S)$. 
In the former case, $S \cup X = B_t$ for $t=t^1_{S,X}$. 
In the latter case,  $S\cup \{v_{S,X}\} = B_t$ for $t=t^2_{S,X}$.  
Hence, \ref{prop2} is satisfied. 

Next, we define the path-decomposition of $G_k$. 
The path $P$ indexing the decomposition is defined simply by taking the union of the paths $P^H$ and  $P^{H,S}$ for all $S\in \calS(H)$, and connecting them in a path-like way (arbitrarily). 
The bags $B_p$ ($p\in V(P)$) are defined as follows (see Figure~\ref{fig:pd} for an illustration):  
$$
B_p := \left\{ 
\begin{array}{ll}
B^H_p & \textrm{ if } p\in V(P^H) \\[1ex]
B^{H, S}_p \cup  \{v_{S,X} \mid X\in\calS(H_S)\}  & \textrm{ if }  p \in V(P^{H, S}) \textrm{ for some }  S\in \calS(H)
\end{array}
\right. 
$$

\begin{figure}[t]
\centering
\includegraphics{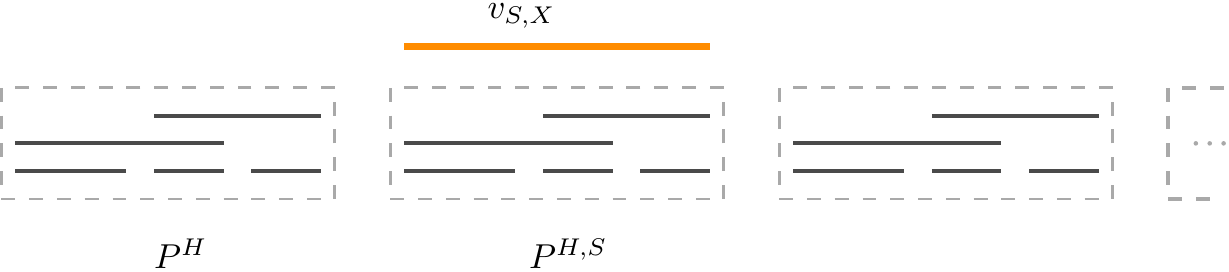}
\caption{Path-decomposition of $G_k$.\label{fig:pd}}
\end{figure}

Observe that $(P,\{B_p\}_{p\in V(P)})$ is a path-decomposition of $G_k$. 
Indeed, for each vertex $v\in V(G_k)$ the set of vertices $p\in V(P)$ such that $v\in B_p$ clearly induces a subpath of $P$. 
Moreover, the two endpoints of each new edge of the form $v_{S,X}x$ with $S\in \calS(H)$, $X\in \calS(H_S)$, and $x\in X$ lie in a common bag since $v_{S,X} \in B^{H,S}_p$ for every $p\in V(P^{H,S})$.

Let us prove that property~\ref{prop3} is satisfied.
Consider sets $S\in\calS(H)$ and $X\in\calS(H_S)$, and a vertex $p\in V(P)$. 
First suppose  $p \in V(P^H)$. 
Then $(S \cup X) \cap B_p = S \cap B^H_p$, and thus $|(S \cup X) \cap B_p| \leq 1$ holds by induction. 
Similarly, $(S \cup \{v_{S,X}\}) \cap B_p = S \cap B^H_p$  and again $|(S \cup  \{v_{S,X}\}) \cap B_p| \leq 1$ follows from induction. 
Next assume  $p \in V(P^{H, S})$ for some $S\in \calS(H)$. 
Then $(S \cup X) \cap B_p = X \cap B^{H,S}_p$, and thus $|(S \cup X) \cap B_p| \leq 1$ by induction. 
Also, $(S \cup \{v_{S,X}\}) \cap B^{H,S}_p =  \{v_{S,X}\}$  and hence $|(S \cup  \{v_{S,X}\}) \cap B_p| = 1$. 
It follows that property~\ref{prop3} holds.

It remains to show that our newly defined tree and path-decompositions together satisfy property~\ref{prop1}. 
Let thus  $t\in V(T)$ and  $p \in V(P)$. 
First, suppose that  $t\in V(T^H)$. 
If $p\in V(P^H)$, then $|B_t \cap B_p| \leq 2$ holds by induction. 
If $p \in V(P^{H, S})$ for some $S\in \calS(H)$, then $B_t$ and $B_p$ are disjoint.

Next, suppose that $t=t_{S,X}$ for some $S\in\calS(H)$ and $X\in\calS(H_S)$. 
Thus,  $B_t = S \cup X \cup \{v_{S,X}\}$. 
If $p\in V(P^H)$, then  $B_t \cap B_p = S \cap B^H_p$, and we know that this set has size at most $1$ by induction, since $H$ satisfies property~\ref{prop3}. 
If $p \in V(P^{H, S'})$ for some $S'\in \calS(H)$ distinct from $S$, then  $B_t$ and $B_p$ are disjoint.  
If $p \in V(P^{H, S})$, then  $B_t \cap B_p = (X \cap B^{H,S}_p) \cup \{v_{S,X}\}$. 
Since $|X \cap B^{H,S}_p| \leq 1$ holds by induction thanks to property~\ref{prop3}, we deduce that  $|B_t \cap B_p| \leq 2$.

The above observations also imply that $|B_t \cap B_p| \leq 2$ if $t=t^1_{S,X}$ or $t=t^2_{S,X}$  for some $S\in\calS(H)$ and $X\in\calS(H_S)$, since $B_t \subseteq S \cup X \cup \{v_{S,X}\}$ in these cases.   
Finally, suppose that $t \in V(T^{H, S})$ for some $S\in \calS(H)$ and $t\neq t_{S, X}$ for all $X\in \calS(H_S)$. 
Then $B_t = S \cup B^{H, S}_t$. 
If $p\in V(P^H)$, then  $B_t \cap B_p = S \cap B^H_p$, and (as in the above paragraph) that set has size at most $1$ by induction, since $H$ satisfies property~\ref{prop3}. 
If $p \in V(P^{H, S'})$ for some $S'\in \calS(H)$ distinct from $S$, then  $B_t$ and $B_p$ are disjoint.  
If $p \in V(P^{H, S})$, then  $|B_t \cap B_p| = |B^{H, S}_t \cap  B^{H, S}_p| \leq 2$ by induction. 

Hence,  $|B_t \cap B_p| \leq 2$ holds in all cases, and therefore property~\ref{prop1} is satisfied. 
\end{proof}

We conclude the paper with an open problem in the spirit of exploring how the Asplund-Gr\"unbaum result~\cite{AG60} could be extended. 
A {\em spaghetti tree-decomposition} of a graph $G$ is a tree-decomposition $(T,\{B_t\}_{t\in V(T)})$ of $G$ such that $T$ is rooted at some vertex $r\in V(T)$ and, orienting all edges of $T$ away from $r$, the subtree $T_v$ of $T$ induced by $\{t \in V(T): v \in B_t\}$ is a directed path for each vertex $v\in V(G)$. 

\begin{conjecture} 
There exists a function $f: \mathbb{N} \to \mathbb{N}$ such that $\chi(G) \leq f(k)$ for every $k \geq 1$ and every graph $G$ admitting a spaghetti tree-decomposition $(T,\{B_t\}_{t\in V(T)})$ and a path-decomposition $(P,\{B_p\}_{p\in V(P)})$ such that $|B_t \cap B_p| \leq k$ for every $t\in V(T)$ and $p\in V(P)$. 
\end{conjecture}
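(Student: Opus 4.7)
The plan is to represent $G$ as a subgraph of the intersection graph of ``tree-rectangles'' $R_v := T_v \times I_v$ in the product $V(T) \times V(P)$, where $T_v = \{t \in V(T) : v \in B_t\}$ is a rooted directed path in $T$ (by the spaghetti assumption) and $I_v = \{p \in V(P) : v \in B_p\}$ is an interval in $P$. The hypothesis $|B_t \cap B_p| \leq k$ then becomes the statement that every point $(t,p) \in V(T) \times V(P)$ is covered by at most $k$ of the rectangles $R_v$. This is a natural generalization of the Asplund-Gr\"unbaum setting~\cite{AG60}, which is the case when $T$ is a path.

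The first ingredient would be a slice-wise bound. For each fixed $p \in V(P)$, the induced subgraph $G[B_p]$ embeds in the intersection graph of the rooted directed paths $\{T_v : v \in B_p\}$ in $T$, which is a path graph in the sense of Gavril and hence chordal. By the Helly property for paths in trees, any clique in this intersection graph consists of paths sharing a common vertex $t \in V(T)$, so it lies in $B_t \cap B_p$ and has size at most $k$. Chordal graphs being perfect, we conclude $\chi(G[B_p]) \leq k$ for every $p \in V(P)$.

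The main task is to glue these slice-colorings into a global proper coloring, and I would attempt this by induction on $k$, emulating the inductive step of the Asplund-Gr\"unbaum argument. One would look for a subset $S \subseteq V(G)$ such that $\chi(G[S])$ is bounded in terms of $k$ and such that $G - S$ admits a spaghetti tree-decomposition and a path-decomposition of $k$-width at most $k-1$; the induction hypothesis would then give $\chi(G) \leq \chi(G[S]) + f(k-1)$. A natural candidate for $S$ is a family of ``extremal'' vertices, one from each pair $(t,p)$ with $|B_t \cap B_p| = k$, selected by sweeping along $P$ (for instance, the vertex of $B_t \cap B_p$ whose interval $I_v$ closes first).

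The main obstacle is verifying simultaneously that $G[S]$ has bounded chromatic number and that $G-S$ really achieves a strictly smaller $k$-width; this is the analog of the nontrivial inductive step in Asplund-Gr\"unbaum, made harder by the tree structure of $T$, which prevents a direct reduction to a single linear sweep. If this extremal-set approach proves intractable, a fallback is an on-line coloring argument: process vertices of $G$ in the order of $\min I_v$ and give each vertex the smallest color not used by its already-colored, still-active neighbors, appealing to a suitable generalization of the Kierstead-Trotter analysis of First-Fit on interval graphs to the class of rooted-path graphs arising in the slices $G[B_p]$.
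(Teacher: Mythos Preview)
The statement you are attempting to prove is presented in the paper as an open \emph{conjecture}, not a theorem; the paper offers no proof and explicitly leaves it as a problem for future work. There is therefore nothing in the paper to compare your argument against.

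As for the proposal itself: your preliminary observations are sound. The representation $R_v = T_v \times I_v$ is the right way to think about the hypothesis, and the slice-wise bound $\chi(G[B_p]) \leq k$ is correct for exactly the reason you give (intersection graphs of subtrees of a tree are chordal, the Helly property bounds the clique number by $k$, and perfection does the rest). However, the heart of the matter is the gluing step, and here your write-up is a plan rather than a proof. You yourself flag the obstacle: for the extremal-set approach you would need to show both that $G[S]$ has bounded chromatic number \emph{and} that removing $S$ strictly reduces the $k$-width, and neither claim is established. The Asplund--Gr\"unbaum induction relies crucially on the linear order of \emph{both} axes to define ``first to close'' in a way that controls the chromatic number of the peeled layer; with a tree in one coordinate there is no single sweep direction, and it is not at all clear that any choice of extremal vertex per saturated pair $(t,p)$ yields a set $S$ of bounded chromatic number. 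The First-Fit fallback has the same difficulty: the Kierstead--Trotter analysis uses the interval structure on both sides, and rooted-path graphs in a tree do not obviously admit the same wall-building argument. In short, you have correctly identified where the difficulty lies, but resolving it would amount to settling the conjecture.
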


We remark that, for all we know, the above conjecture could even be true with two spaghetti tree-decompositions. 
Let us also mention that the class of graphs $G$ that admit a spaghetti tree-decomposition $(T,\{B_t\}_{t\in V(T)})$ such that $uv \in E(G)$ {\em if and only if} $T_u$ and $T_v$ intersect has been studied by Galvin~\cite{G75} (note that this is a subclass of chordal graphs). 

\subsection*{Acknowledgements} 

We thank Ross Kang for pointing out references~\cite{G03,RA08} and an anonymous referee for mentioning reference~\cite{G75}. 

\bibliographystyle{plain}
\bibliography{bibliography}

\end{document}